\newtheorem{thm}{Theorem}[section]
\newtheorem{cor}[thm]{Corollary}
\newtheorem{lem}[thm]{Lemma}
\newtheorem{prop}[thm]{Proposition}
\newtheorem{conj}[thm]{Conjecture}
\newtheorem{defn}[thm]{Definition}
\theoremstyle{definition}
\theoremstyle{remark}
\newtheorem{rem}[thm]{Remark}
\numberwithin{equation}{section}
\newcommand{\Pee}{\mathcal{P}}
\newcommand{\Text}[1]{\text{\textnormal{#1}}}
\newcommand{\per}{\text{\textnormal{per}}}
\newcommand{\Sym}{\text{\textnormal{Sym}}}
\newcommand{\id}{\text{\textnormal{id}}}
\begin{document}

\title{On extensions of the Alon-Tarsi Latin Square conjecture}
\author{Daniel Kotlar}
\address{Computer Science Department, Tel-Hai College, Upper Galilee 12210, Israel}%
\email{dannykot@telhai.ac.il}

\thanks{I Thank an anonymous reviewer for providing the proofs of Theorem~\ref{thm3:1} and Corollary~\ref{cor1}}%
\subjclass{68R05, 05B15, 15A15}
\keywords{Alon-Tarsi conjecture, Latin square, Parity of a Latin square, adjacency matrix}

%\date{March 26, 2012}
%\dedicatory{}%
%\commby{}%
% ----------------------------------------------------------------
\begin{abstract}
Expressions involving the product of the permanent with the $(n-1)^\Text{st}$ power of the determinant of a matrix of indeterminates, and of (0,1)-matrices, are shown to be related to an extension to odd dimensions of the Alon-Tarsi Latin Square Conjecture. These yield an alternative proof of a theorem of Drisko, stating that the extended conjecture holds for odd primes. An identity involving an alternating sum of permanents of (0,1)-matrices is obtained.
\end{abstract}
\maketitle
% ----------------------------------------------------------------
\section{Introduction}\label{section1}
 A \emph{Latin square} of order $n$ is an $n \times n$ array of numbers in $[n]:=\{1,\ldots,n\}$ so that each number appears exactly once in each row and each column. Let $L_n$ be the number of Latin squares of order $n$. Let $\Sym(n)$ be the symmetric group of permutations of $[n]$. For a permutation $\pi\in\Sym(n)$ we denote its sign by $\epsilon(\pi)$. Viewing the rows and columns of a Latin square $L$ as elements of $\Sym(n)$, the row-sign (column-sign) of $L$ is defined to be the product of the signs of the rows (columns) of $L$. The sign of $L$, denoted $\epsilon(L)$, is the product of the row-sign and the column-sign of $L$. The \emph{parity} of a Latin square is even (resp. odd) if its sign is 1 (resp. -1). The \emph{row parity} and \emph{column parity} of a Latin square are defined analogously. We denote by $L_n^{\Text{EVEN}}$ ($L_n^{\Text{ODD}}$) the number of even (odd) Latin squares of order $n$. The Alon-Tarsi Latin Square Conjecture \cite{AlonTarsi92} asserts that for even $n$, $L_n^{\Text{EVEN}}-L_n^{\Text{ODD}}\neq0$. Values of $L_n^{\Text{EVEN}}-L_n^{\Text{ODD}}$ for small $n$ can be found in \cite{StWan12}. Drisko \cite{Drisko97} proved the conjecture for $n=p+1$, where $p$ is an odd prime, and Glynn \cite{Glynn10} proved it for $n=p-1$. Since for odd $n$ $L_n^{\Text{EVEN}}=L_n^{\Text{ODD}}$ some extensions of this conjecture, that hold for odd $n$, were proposed, as will be described shortly.

A Latin square is called \emph{normalized} if its first row is the identity permutation, and \emph{unipotent} if all the elements of its main diagonal are equal. Let $U_n^{\Text{E}}$ and $U_n^{\Text{O}}$ be the numbers of normalized unipotent even and odd Latin squares, respectively. Zappa \cite{Zappa97} defined the Alon-Tarsi constant $AT(n):=U_n^{\Text{E}}-U_n^{\Text{O}}$ and introduced the following extension of the Alon-Tarsi conjecture:
\begin{conj}\label{conj1}
For all $n$, $AT(n)\neq0$
\end{conj}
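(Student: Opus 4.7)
The plan is to reduce the nonvanishing of $AT(n)$ to a coefficient identity involving $\per(X)\det(X)^{n-1}$, as foreshadowed by the abstract, and then to evaluate that coefficient via a structured specialization. I would first encode each normalized unipotent Latin square $L$ as an ordered tuple $(C_1,\dots,C_n)$ of column permutations with $C_k(i)=j$ iff $L_{ij}=k$; the combined normalization and unipotence constraints force $C_1=\id$ and $C_k(1)=k$ for $k\ge 2$, and the sign of $L$ factors as $\epsilon(L)=\prod_k\epsilon(C_k)\cdot\prod_i\epsilon(R_i)$, where $R_i$ is the $i$th row. So $AT(n)$ becomes a signed sum over admissible tuples with those two boundary conditions.

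The next step is an algebraic translation: establish an identity of the shape $AT(n)=[\mathbf{x}^{\mathbf{1}}]\, p(X)\cdot\det(X)^{n-1}$, where $p(X)$ is an explicit permanent-like factor that freezes the first row, pins down the diagonal color, and selects the distinguished color channel corresponding to $\per$ in the abstract's expression $\per(X)\det(X)^{n-1}$. To evaluate this coefficient I would substitute for $X$ a matrix whose determinant factors algebraically --- a circulant, a Vandermonde in roots of unity, or a character-table-like matrix of $\Sym(n)$ --- and read off the coefficient from the resulting product. Failing a clean closed form, a fallback is an induction on $n$: expand along the first free column, express $AT(n)$ as a positive combination of signed contributions from subpatterns of order $n-1$, and try to control the recurrence using $AT(n-1)$ as an inductive hypothesis combined with an auxiliary positivity input.

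The main obstacle is that Conjecture \ref{conj1} is open in general and is at least as hard as the original Alon--Tarsi conjecture in the even case; the mod-$p$ techniques of Drisko and Glynn that settle $n=p\pm 1$ exploit characteristic-$p$ cancellations inside $\Sym(n)$ with no available counterpart for composite $n$. The hardest step, and the one where I expect the argument to stall, is the final evaluation: absent either (i) a parity-respecting involution on normalized unipotent Latin squares whose fixed-point set is provably unbalanced, or (ii) a polynomial identity expressing the target coefficient as a manifestly nonzero character sum, factorial, or product, each of the reductions above merely rewrites the conjecture in a new language rather than resolving it. A genuine proof would need some external structural input --- plausibly a representation-theoretic interpretation of $AT(n)$ or a new symmetry of $\per(X)\det(X)^{n-1}$ --- to break the circularity.
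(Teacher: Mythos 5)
The statement you were asked to prove is a \emph{conjecture}, not a theorem: the paper offers no proof of it, and it remains open in general. Your proposal, to its credit, recognizes this explicitly --- you concede at the end that each reduction ``merely rewrites the conjecture in a new language rather than resolving it.'' That concession is the correct assessment, but it also means the proposal is not a proof and contains an irreparable gap at exactly the step you identify: there is no known evaluation of the coefficient of $\prod_{i,j}X_{ij}$ in $\per(X)\det(X)^{n-1}$ for general odd $n$, and no parity-reversing involution with a provably unbalanced fixed-point set is known. Substituting a circulant or Vandermonde matrix for $X$ does not extract a single coefficient of a polynomial in $n^2$ independent indeterminates, so that specialization idea cannot work as stated without an additional averaging device (and the natural one, averaging over roots of unity, reintroduces the full sum you are trying to avoid).

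It is worth noting how your intermediate reductions line up with what the paper actually establishes. Your coefficient identity is essentially the paper's Theorem~\ref{thm2}: for odd $n$, $AT(n)$ is (up to sign and the factor $n!(n-1)!$) the coefficient of $\prod_{i,j}X_{ij}$ in $\per(X)\det(X)^{n-1}$, proved there via Lemma~\ref{lem2} by converting symbol-signs to column-signs through the conjugate mapping $(i,j,k)\mapsto(i,k,j)$ and normalizing first rows and columns. The one case in which the coefficient \emph{can} be evaluated is $n=p$ an odd prime, and there the paper's Section~\ref{section3} does exactly the kind of ``characteristic-$p$ cancellation'' you mention: a cyclic group action on $(0,1)$-matrices whose non-full orbits are the row-shifted matrices, Fermat's Little Theorem, and a binomial identity, yielding $AT(p)\equiv(-1)^{(p-1)/2}\pmod{p}$ (Corollary~\ref{cor1}, originally Drisko's theorem). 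If you want a provable statement rather than a restatement of the conjecture, that prime case is the target your machinery can actually reach.
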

 A Latin square is called \emph{reduced} if its first row and first column are the identity permutation. Let $R_n^\Text{E}$ and $R_n^\Text{O}$ denote the numbers of even and odd reduced Latin squares of order $n$, respectively. Another possible extension of the Alon-Tarsi conjecture is the following (see \cite{Kot2011} and \cite{StWan12}):
\begin{conj}\label{conj2}
For all $n$, $R_n^\Text{E}-R_n^\Text{O}\neq0$
\end{conj}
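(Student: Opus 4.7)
The plan is to attack Conjecture~\ref{conj2} by reducing it to the original Alon--Tarsi conjecture in the even case, and by developing new algebraic machinery in the odd case.

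For even $n$, first I would track how the sign of a Latin square transforms under (i)~relabeling the symbols and (ii)~permuting the rows. Relabeling by $\rho\in\Sym(n)$ replaces each row $r_i$ by $\rho\circ r_i$ and each column $c_j$ by $\rho\circ c_j$, multiplying both the row-sign and the column-sign by $\epsilon(\rho)^n$ for a total factor of $\epsilon(\rho)^{2n}=1$; permuting rows by $\tau$ leaves the row-sign unchanged and multiplies the column-sign by $\epsilon(\tau)^n$, which equals $1$ for even $n$. Since every Latin square is brought to reduced form by first relabeling so the first row is the identity and then permuting rows $2,\ldots,n$ so the first column is the identity, and this correspondence is $n!\,(n-1)!$-to-$1$, these calculations yield
\begin{equation*}
L_n^{\Text{EVEN}}-L_n^{\Text{ODD}} \;=\; n!\,(n-1)!\,\bigl(R_n^\Text{E}-R_n^\Text{O}\bigr),
\end{equation*}
so Conjecture~\ref{conj2} for even $n$ is equivalent to the original Alon--Tarsi conjecture; in particular the theorems of Drisko and Glynn settle $n=p\pm 1$ for $p$ an odd prime.

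The odd case carries the genuinely new content. Here $\epsilon(\tau)^n=\epsilon(\tau)$ and the same bookkeeping gives
\begin{equation*}
L_n^{\Text{EVEN}}-L_n^{\Text{ODD}} \;=\; n!\,\bigl(R_n^\Text{E}-R_n^\Text{O}\bigr)\sum_{\tau\in\Sym(n-1)}\epsilon(\tau) \;=\; 0,
\end{equation*}
so the reduction collapses and we learn nothing about $R_n^\Text{E}-R_n^\Text{O}$ from $L_n^{\Text{EVEN}}-L_n^{\Text{ODD}}$. My plan would then be to adapt the permanent/determinant identities previewed in the abstract: work with an $(n-1)\times(n-1)$ matrix $A$ of indeterminates encoding only the ``free'' entries (rows and columns $2,\ldots,n$) of a reduced square, and find an identity expressing $R_n^\Text{E}-R_n^\Text{O}$ as a distinguished coefficient in $\per(A)\det(A)^{n-1}$, the $(n-1)$-fold determinant producing the $n-1$ sign-weighted columns and the permanent producing the unsigned row complementary to the identity first row. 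One would then specialize to a carefully chosen $(0,1)$-matrix so that a characteristic-$p$ computation in the spirit of Drisko forces nonvanishing when $n-1$ is an odd prime.

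The main obstacle is showing nonvanishing in full generality. Even granted such an algebraic expression, the odd-$n$ nonvanishing problem for reduced squares is not implied by any known case of Alon--Tarsi, since the even-to-odd reduction above is trivial for odd $n$; and the expressions are not obviously positive nor easy to evaluate modulo primes. The most one can realistically hope to prove by this route is an odd-$n$ analogue of Drisko's and Glynn's theorems, covering $n-1$ or $n+1$ prime; the remaining odd values of $n$ appear to require genuinely new ideas.
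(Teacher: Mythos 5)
The statement you have been asked to address is Conjecture~\ref{conj2}; it is an open conjecture, the paper offers no proof of it, and your proposal does not supply one either. Your even-$n$ reduction is correct and is exactly the equivalence the paper records without proof (``If $n$ is even these two conjectures are equivalent to the Alon-Tarsi conjecture''): the sign bookkeeping under symbol relabeling and row permutation is right, the fibers of the reduction map do have size $n!\,(n-1)!$, and for even $n$ the sign is preserved, giving $L_n^{\Text{EVEN}}-L_n^{\Text{ODD}}=n!\,(n-1)!\,(R_n^\Text{E}-R_n^\Text{O})$. But this only trades Conjecture~\ref{conj2} for the Alon--Tarsi conjecture, which is itself open; the theorems of Drisko and Glynn then settle only the even values $n=p\pm1$.

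For odd $n$ your computation correctly shows that the naive reduction collapses, and what remains is a research program rather than a proof: the crucial step --- an identity expressing $R_n^\Text{E}-R_n^\Text{O}$ itself as a distinguished coefficient of a permanent-times-determinant expression, followed by a nonvanishing argument --- is precisely what is missing, as you acknowledge. Be aware that the paper's odd-$n$ machinery (Theorem~\ref{thm2}, Theorem~\ref{thm3}, Corollary~\ref{cor1}) computes $AT(n)=U_n^{\Text{E}}-U_n^{\Text{O}}$, not $R_n^\Text{E}-R_n^\Text{O}$, and the paper explicitly warns that for odd $n$ it is not known whether Conjectures~\ref{conj1} and~\ref{conj2} are equivalent, since the bijection between reduced and normalized unipotent squares need not respect the relevant signs; so a Drisko-style mod-$p$ computation for your hypothetical reduced-square identity would not follow from anything already established. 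The closest the paper comes to your program is Theorem~\ref{thm4:1}, which exhibits the product $AT(n)\cdot(R_n^\Text{E}-R_n^\Text{O})$ as a single coefficient of an explicit polynomial, so that nonvanishing of that one coefficient would settle both conjectures at once --- a different, and arguably sharper, way of packaging the same hope, but still only a reformulation rather than a proof.
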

If $n$ is even these two conjectures are equivalent to the Alon-Tarsi conjecture. However, for odd $n$ it is not clear whether the two conjectures are equivalent, despite the existence of a bijection between reduced Latin squares and normalized unipotent Latin squares of order $n$ (see \cite{Zappa97}). Drisko \cite{Drisko98} proved Conjecture~\ref{conj1} in the case that $n$ is an odd prime. Conjecture~\ref{conj2} is known to be true for small values of $n$ (see \cite{StWan12}).

A Latin square $L$ of order $n$ determines $n$ permutation matrices $P_s$, $s\in[n]$, defined by $(P_s)_{ij}=1$ if and only if $L_{ij}=s$. Let $S_n$ be the collection of all $n\times n$ permutation matrices. For $P\in S_n$ let $\alpha_P$ be the corresponding permutation in $\Sym(n)$. The \emph{symbol-sign} $\epsilon_{\Text{sym}}(L)$ is the product of the $\epsilon(\alpha_{P_s})$, $s=1,\ldots,n$. A Latin square $L$ is \emph{symbol-even} if $\epsilon_{\Text{sym}}(L)=1$ and  \emph{symbol-odd} if $\epsilon_{\Text{sym}}(L)=-1$.

Let $X=(X_{ij})$ be the $n\times n$ matrix of indeterminates. The following theorem is due to MacMahon \cite{Mac98}:
\begin{thm}\label{thm0}
$L_n$ is the coefficient of $\prod_{i=1}^n\prod_{j=1}^nX_{ij}$ in $\per(X)^n$,
\end{thm}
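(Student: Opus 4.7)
\medskip
\noindent\textbf{Proof plan.} The plan is to expand $\per(X)^n$ explicitly as a sum of monomials indexed by $n$-tuples of permutations, and then to set up a bijection between those tuples that contribute to the coefficient of $\prod_{i,j}X_{ij}$ and the Latin squares of order $n$.

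First I would write
\[
\per(X)^n=\sum_{(\sigma_1,\ldots,\sigma_n)\in\Sym(n)^n}\prod_{s=1}^n\prod_{i=1}^n X_{i,\sigma_s(i)},
\]
and observe that, for a fixed tuple $(\sigma_1,\ldots,\sigma_n)$, the exponent of $X_{ij}$ in the corresponding monomial equals $\#\{s:\sigma_s(i)=j\}$. Therefore the monomial equals $\prod_{i,j}X_{ij}$ if and only if, for every pair $(i,j)$, there exists a unique $s\in[n]$ with $\sigma_s(i)=j$.

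Next I would make explicit the bijection between such tuples and Latin squares. Given a contributing tuple $(\sigma_1,\ldots,\sigma_n)$, define an $n\times n$ array $L$ by $L_{ij}=s$ iff $\sigma_s(i)=j$. For each row $i$, the map $s\mapsto\sigma_s(i)$ is a bijection $[n]\to[n]$ (since every $j$ is hit exactly once), so the row $i$ of $L$ contains every symbol exactly once. For each column $j$, the condition $\sigma_s(i)=j$ pins down $i=\sigma_s^{-1}(j)$ for each $s$, so each symbol $s$ appears exactly once in column $j$. Hence $L$ is a Latin square. Conversely, any Latin square $L$ gives back a tuple of permutations by $\sigma_s(i):=$ the unique $j$ with $L_{ij}=s$; the row and column properties of $L$ make each $\sigma_s$ a well-defined permutation and guarantee the uniqueness condition in the previous paragraph. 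These two constructions are mutually inverse.

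Finally, each contributing tuple contributes exactly $1$ to the coefficient of $\prod_{i,j}X_{ij}$, so the coefficient equals the number of Latin squares, namely $L_n$. I do not expect a serious obstacle: the only place one has to be careful is checking that the correspondence $L\leftrightarrow(\sigma_1,\ldots,\sigma_n)$ really is a bijection onto the full set of contributing tuples, which amounts to matching the Latin-square axioms (one symbol per row, one symbol per column) with the two directions of the ``unique $s$ for each $(i,j)$'' condition.
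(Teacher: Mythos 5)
Your proposal is correct: the expansion of $\per(X)^n$ over $n$-tuples of permutations, the observation that the square-free monomials are exactly those tuples where each $(i,j)$ is hit by a unique $s$, and the bijection $L_{ij}=s\iff\sigma_s(i)=j$ together give a complete proof. The paper itself states this result without proof (citing MacMahon), but your argument is the standard one and is exactly the expansion technique the paper uses for its own analogous results (e.g.\ the treatment of $\per(X)\det(X)^{n-1}$ in Theorem~\ref{thm2}), so there is nothing to object to.
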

where $\per(A)$ denotes the permanent of $A$. Stones \cite{Stones12} showed that if we replace permanent by determinant in the expression in Theorem~\ref{thm0} an expression for the Alon-Tarsi conjecture is obtained, namely
\begin{thm}\label{thm1}
$L_n^{\Text{EVEN}}-L_n^{\Text{ODD}}$ is the coefficient of $(-1)^{n(n-1)/2}\prod_{i=1}^n\prod_{j=1}^nX_{ij}$ in $\det(X)^n$.
\end{thm}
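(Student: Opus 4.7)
My plan is to expand $\det(X)^n$ via the Leibniz formula and track the coefficient of the multilinear monomial $\prod_{i,j}X_{ij}$. Writing
\[
\det(X)^n \;=\; \sum_{(\pi_1,\ldots,\pi_n)\in\Sym(n)^n} \Bigl(\prod_{s=1}^n \epsilon(\pi_s)\Bigr)\,\prod_{s,i}X_{i,\pi_s(i)},
\]
the monomial $\prod_{i,j}X_{ij}$ arises precisely from tuples $(\pi_1,\ldots,\pi_n)$ for which $\{\pi_s(i):s\in[n]\}=[n]$ for every $i$. Defining $L_{ij}=s$ whenever $\pi_s(i)=j$ gives a bijection between such tuples and Latin squares of order $n$, under which $\pi_s$ becomes the symbol permutation $\alpha_{P_s}$. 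This is the same bijection that appears in the proof of MacMahon's Theorem~\ref{thm0}; carrying the signs through shows that the coefficient of $\prod_{i,j}X_{ij}$ in $\det(X)^n$ equals $\sum_L \epsilon_{\Text{sym}}(L)$.

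The task then reduces to the parity identity
\[
\epsilon_{\Text{row}}(L)\cdot\epsilon_{\Text{col}}(L)\cdot\epsilon_{\Text{sym}}(L) \;=\; (-1)^{n(n-1)/2}
\]
for every Latin square $L$, which I would prove by regarding $L$ as the set of $n^2$ triples $(i,j,L_{ij})$ and comparing the signs of the permutations of $[n]^2$ that convert one linear ordering of these triples into another. Passing between $(i,j)$-lex order and $(j,i)$-lex order costs the sign of the transpose involution on $n^2$ elements, which is $(-1)^{n(n-1)/2}$; re-sorting the triples within each row, column, or symbol block costs $\epsilon_{\Text{row}}(L)$, $\epsilon_{\Text{col}}(L)$, or $\epsilon_{\Text{sym}}(L)$ respectively. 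Two distinct chains of such moves carrying, say, $(i,j)$-order to $(k,j)$-order yield two expressions for the same overall sign, and equating them rearranges to the identity above.

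Combining the two steps, the coefficient of $\prod_{i,j}X_{ij}$ in $\det(X)^n$ is $(-1)^{n(n-1)/2}(L_n^{\Text{EVEN}}-L_n^{\Text{ODD}})$, so the coefficient of $(-1)^{n(n-1)/2}\prod_{i,j}X_{ij}$ is $L_n^{\Text{EVEN}}-L_n^{\Text{ODD}}$ as claimed. The main obstacle is the parity identity; the expansion step is a routine adaptation of MacMahon's argument once the tuple/Latin-square correspondence is in place.
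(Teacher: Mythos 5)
The paper does not actually prove Theorem~\ref{thm1}; it quotes it from Stones, so there is no in-paper argument to compare against. Judged on its own, your proof is correct and is essentially the standard derivation. The expansion step is fine: the Leibniz expansion of $\det(X)^n$ contributes the monomial $\prod_{i,j}X_{ij}$ exactly from tuples $(\pi_1,\ldots,\pi_n)$ with $s\mapsto\pi_s(i)$ bijective for each $i$, the correspondence $L_{ij}=s\iff\pi_s(i)=j$ identifies $\pi_s$ with $\alpha_{P_s}$, and the coefficient is $\sum_L\epsilon_{\Text{sym}}(L)$. The real content is the parity identity $\epsilon_{\Text{row}}(L)\,\epsilon_{\Text{col}}(L)\,\epsilon_{\Text{sym}}(L)=(-1)^{n(n-1)/2}$, which is a known fact about conjugate signs of Latin squares, and your reordering argument does establish it: the two chains $(i,j)\to(j,i)\to(j,k)\to(k,j)$ and $(i,j)\to(i,k)\to(k,i)\to(k,j)$ give $\epsilon_{\Text{col}}(L)=(-1)^{\binom{n}{2}}\cdot\epsilon_{\Text{row}}(L)\,(-1)^{\binom{n}{2}}\,\epsilon_{\Text{sym}}(L)\cdot(-1)^{\binom{n}{2}}$ after counting the transposes correctly, and the transpose sign $(-1)^{\binom{n}{2}^2}=(-1)^{\binom{n}{2}}$ is as you claim. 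One small point worth making explicit in a write-up: the re-sorting permutation within a block is the relevant row, column, or symbol permutation (or its inverse, which has the same sign), so the direction of each move does not affect the sign bookkeeping. A quick sanity check at $n=2$, where the coefficient of $\prod X_{ij}$ in $\det(X)^2$ is $-2$ and $L_2^{\Text{EVEN}}-L_2^{\Text{ODD}}=2$, confirms the final sign.
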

The idea of taking the $n^\Text{th}$ power of the determinant was used by Stones \cite{Stones12} to obtain another expression for $L_n^{\Text{EVEN}}-L_n^{\Text{ODD}}$:
\begin{thm}\label{thm1:5}
Let $B_n$ be the set of $n \times n$ $(0,1)$-matrices. For $A\in B_n$ let $\sigma_0(A)$ be the number of zero elements in $A$. Then
\begin{equation}\label{eq8}
    L_n^{\Text{EVEN}}-L_n^{\Text{ODD}}=(-1)^{\frac{n(n-1)}{2}}\sum_{A\in B_n}(-1)^{\sigma_0(A)}\det(A)^{n}
\end{equation}
\end{thm}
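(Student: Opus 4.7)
The plan is to invoke Theorem~\ref{thm1} and then extract the coefficient of $\prod_{i,j}X_{ij}$ in $\det(X)^n$ by a standard inclusion--exclusion over $(0,1)$-substitutions. The crucial observation is a matching of degrees: $\det(X)^n$ is homogeneous of degree $n^2$ in the $n^2$ variables $X_{ij}$, and the target monomial $\prod_{i,j}X_{ij}$ has total degree $n^2$ with every variable to the first power. This is exactly the situation in which the monomial coefficient can be written as an alternating sum of values of the polynomial at $(0,1)$-vectors.

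The key identity I would isolate as a lemma is the following: for any polynomial $P(y_1,\dots,y_m)$ that is homogeneous of degree $m$,
\[
[y_1\cdots y_m]\,P \;=\; \sum_{S\subseteq[m]} (-1)^{m-|S|}\,P(\mathbf{1}_S),
\]
where $\mathbf{1}_S$ is the characteristic vector of $S$. By linearity this reduces to checking each monomial $y^\alpha$ with $|\alpha|=m$. Evaluating $y^\alpha$ at $\mathbf{1}_S$ gives $1$ exactly when $S\supseteq\{i:\alpha_i\ge 1\}$ and $0$ otherwise, so the right-hand side equals $\sum_{S\supseteq\text{supp}(\alpha)}(-1)^{m-|S|}$; this sum vanishes unless $\text{supp}(\alpha)=[m]$, which (using $|\alpha|=m$) forces $\alpha=(1,\dots,1)$, and in that case the sum is $1$.

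Applying this with $m=n^2$ and $P=\det(X)^n$, the subsets of $[n]\times[n]$ are in bijection with $B_n$ via $A\leftrightarrow\text{supp}(A)$; substituting $\mathbf{1}_S$ for $X$ amounts to evaluating the determinant at the $(0,1)$-matrix $A$, giving $P(\mathbf{1}_S)=\det(A)^n$. Since $|S|=n^2-\sigma_0(A)$, the weight $(-1)^{m-|S|}$ becomes $(-1)^{\sigma_0(A)}$. Combining this with the factor $(-1)^{n(n-1)/2}$ that appears in Theorem~\ref{thm1} yields \eqref{eq8}. I do not foresee a substantive obstacle; the only care required is in verifying the monomial-extraction lemma and in correctly matching the degree $n^2$ and the sign $(-1)^{\sigma_0(A)}$ coming from the complement relation $\sigma_0(A)=n^2-|S|$.
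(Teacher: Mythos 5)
Your argument is correct: the finite-difference lemma you isolate is valid (the verification on monomials $y^\alpha$ with $|\alpha|=m$ is exactly right), the degree count $\deg\det(X)^n=n^2$ matches the number of variables, and the sign bookkeeping $(-1)^{m-|S|}=(-1)^{\sigma_0(A)}$ is correct, so \eqref{eq8} follows from Theorem~\ref{thm1}. The paper itself states this theorem without proof, attributing it to Stones, but the method is replayed in the proof of Theorem~\ref{thm3}: there one expands $\det(A)^{n}$ (resp.\ $\per(A)\det(A)^{n-1}$) over $n$-tuples $\Pee$ of permutation matrices and cancels terms of the double sum $\sum_{(A,\Pee)}Z(A,\Pee)$ by a sign-reversing involution that toggles the entry $A_{ij}$ at the lexicographically first cell $(i,j)$ not covered by any $P_s$; the survivors are exactly the pairs in which $A$ is the all-ones matrix and the $P_s$ assemble into a Latin square. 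Your route and the paper's are two faces of the same inclusion--exclusion, but they are packaged differently. Your version is global and modular: it takes Theorem~\ref{thm1} as input and applies a general coefficient-extraction identity that uses nothing about determinants beyond homogeneity of degree $n^2$, so it transfers verbatim to $\per(X)\det(X)^{n-1}$ and gives Theorem~\ref{thm3} from Theorem~\ref{thm2} with no extra work. The paper's involution is local and self-contained: it does not presuppose Theorem~\ref{thm1}, and it makes combinatorially visible why the surviving terms are precisely Latin squares, which is the form needed to push the argument further in Section~2. Either way the statement is proved; no gap.
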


It will be shown in the Section~\ref{section2} that when $n$ is odd, ``hybrid'' expressions involving one permanent and $n-1$ determinants yield analogous results for $AT(n)$. In Section~\ref{section3} an alternative proof of Drisko's result that $AT(p)\neq 0$ for odd primes is shown. In Section~\ref{section4} a formula linking Conjectures~\ref{conj1} and \ref{conj2} is presented. Section~\ref{section5} introduces a formula relating the permanents of all distinct regular $p\times p$ adjacency matrices of bipartite graphs (up to renaming the vertices of one of the sides).
% ----------------------------------------------------------------
\section{Formulae for $AT(n)$}\label{section2}
For $\alpha\in\Sym(n)$ let $L_n^{\Text{SE}}(\alpha)$ (resp. $L_n^{\Text{SO}}(\alpha)$) be the number of symbol-even (resp. symbol-odd) Latin squares with $\alpha=\alpha_{P_1}$. Let $L_n^{\Text{CE}}(\alpha)$ (resp. $L_n^{\Text{CO}}(\alpha)$) be the number of column-even (resp. column-odd) Latin squares with $\alpha$ as the first column. Let $L_n^{\Text{CE}}(\alpha,\beta)$ (resp. $L_n^{\Text{CO}}(\alpha,\beta)$) be the number of column-even (resp. column-odd) Latin squares with $\alpha$ as the first row and $\beta$ as the first column. We have:
\begin{lem}\label{lem2}
If $n$ is odd then
\begin{equation*}
   \sum_{\pi\in\Sym(n)}\epsilon(\pi)(L_n^{\Text{SE}}(\pi)-L_n^{\Text{SO}}(\pi))=(-1)^{\frac{n(n-1)}{2}}n!(n-1)!AT(n)
\end{equation*}
\end{lem}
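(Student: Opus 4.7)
The plan is to transform the left-hand side into a sum over all Latin squares, rewrite the symbol-sign in terms of the ordinary sign $\epsilon(L) := \epsilon_{\Text{row}}(L)\epsilon_{\Text{col}}(L)$ via a classical three-parity identity, and then peel off two layers of symmetry---row-permutations and symbol-relabelings---to reach $AT(n)$.

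First, from the definitions, $L_n^{\Text{SE}}(\pi) - L_n^{\Text{SO}}(\pi) = \sum_{L\,:\,\alpha_{P_1}=\pi}\epsilon_{\Text{sym}}(L)$, so the LHS equals $\sum_L\epsilon(\alpha_{P_1})\,\epsilon_{\Text{sym}}(L)$, with $L$ ranging over all Latin squares of order $n$. For the second step I would establish the identity $\epsilon_{\Text{row}}(L)\,\epsilon_{\Text{col}}(L)\,\epsilon_{\Text{sym}}(L) = (-1)^{n(n-1)/2}$: any swap of two rows (respectively, columns or symbols) flips exactly two of the three factors, each by $(-1)^n$, so this triple product is an invariant of Latin squares of order $n$; evaluation on the cyclic square $L_{ij}\equiv i+j-1\pmod n$ pins down the value. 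This rewrites the LHS as $(-1)^{n(n-1)/2}\sum_L\epsilon(\alpha_{P_1})\,\epsilon(L)$.

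For the third step, let $\mathcal{U}$ denote the set of unipotent Latin squares with diagonal equal to $1$. For each $L$ put $\pi := \alpha_{P_1}$ and $L^*_{ij} := L_{\pi(i),j}$; then $L^*_{ii} = L_{\pi(i),i} = 1$, so $L^* \in \mathcal{U}$, and the map $L \mapsto (\pi, L^*)$ is a bijection from Latin squares of order $n$ onto $\Sym(n) \times \mathcal{U}$. Because $n$ is odd, permuting rows by $\pi$ leaves the row-sign unchanged and multiplies the column-sign by $\epsilon(\pi)^n = \epsilon(\pi)$, so $\epsilon(L^*) = \epsilon(\pi)\,\epsilon(L)$; summing yields $\sum_L\epsilon(\alpha_{P_1})\,\epsilon(L) = n!\sum_{L^* \in \mathcal{U}}\epsilon(L^*)$.

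Finally, the group $G = \{\rho \in \Sym(n) : \rho(1) = 1\}$, of order $(n-1)!$, acts freely on $\mathcal{U}$ by symbol relabeling and preserves $\epsilon$ (relabeling by $\rho$ scales both the row- and column-signs by $\epsilon(\rho)^n = \epsilon(\rho)$, which then cancel). Since the first row of any $L^* \in \mathcal{U}$ sends $1$ to $1$, each $G$-orbit contains exactly one element whose first row is the identity, so $\sum_{L^* \in \mathcal{U}}\epsilon(L^*) = (n-1)!\,AT(n)$. Multiplying the three factors together gives the claim. The most delicate point will be a clean derivation of the three-parity identity in step two; the orbit and bijection arguments that follow are routine once one tracks how row-, column-, and symbol-permutations change $\epsilon$, repeatedly exploiting $\epsilon(\sigma)^n = \epsilon(\sigma)$ for odd $n$.
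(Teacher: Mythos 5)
Your overall route is genuinely different from the paper's and, once one step is repaired, it works. The paper never invokes the three-parity identity: it uses the conjugate operation $(i,j,k)\mapsto(i,k,j)$ to convert symbol-signs into column-signs (so that $L_n^{\Text{SE}}(\pi)-L_n^{\Text{SO}}(\pi)=L_n^{\Text{CE}}(\pi)-L_n^{\Text{CO}}(\pi)$), then normalizes the first column and first row to land on \emph{reduced} squares, and finally quotes Zappa's relation $AT(n)=\pm\bigl(R^{(+,+)}_n-R^{(-,-)}_n\bigr)$ to produce the sign $(-1)^{n(n-1)/2}$ and connect to $AT(n)$. You instead keep the symbol-sign, trade it for $\epsilon(L)$ via the identity $\epsilon_{\Text{row}}(L)\epsilon_{\Text{col}}(L)\epsilon_{\Text{sym}}(L)=(-1)^{n(n-1)/2}$, and normalize directly to unipotent squares; this bypasses reduced squares and Zappa's result entirely, at the cost of needing that identity. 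Your steps three and four are sound (note only that you want $L^*_{ij}:=L_{\pi^{-1}(i),j}$ rather than $L_{\pi(i),j}$ to get $L^*_{ii}=1$; since $\epsilon(\pi)=\epsilon(\pi^{-1})$ this does not affect the count), and the constants come out right.

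The genuine gap is in your derivation of the three-parity identity. Invariance of the triple product under row, column and symbol transpositions says only that it is constant on each orbit of the isotopy action of $\Sym(n)^3$, and that action is \emph{not} transitive on Latin squares of order $n$ for $n\ge 4$ (already at $n=4$ the cyclic and Klein group tables lie in different isotopy classes, and the number of classes grows rapidly). So evaluating on the cyclic square pins down the value only on one isotopy class, not on all of them. The identity itself is true, but it needs a global argument; a standard one counts, over all unordered pairs of distinct cells $(i,j,k)$, $(i',j',k')$ of $L$, the quantities $[(i-i')(j-j')<0]$, $[(i-i')(k-k')<0]$, $[(j-j')(k-k')<0]$: pairs sharing no coordinate contribute an even amount to the total, pairs sharing exactly one coordinate contribute exactly the row, column and symbol inversion counts, and each of the three global tallies equals $\binom{n}{2}^2$, giving $I_r+I_c+I_s\equiv 3\binom{n}{2}^2\equiv\binom{n}{2}\pmod 2$. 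With that substituted for your isotopy argument, your proof goes through.
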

\begin{proof}
Viewing a Latin squares as a set of $n^2$ triples $(i,j,k)$, such that $L_{ij}=k$, and applying the mapping $\tau:(i,j,k)\rightarrow(i,k,j)$, the $k^{\Text{th}}$ column of $\tau(L)$ is the permutation $\alpha_{P_k}$ corresponding to the permutation matrix $P_k$ in $L$. Thus $L_n^{\Text{SE}}(\alpha)=L_n^{\Text{CE}}(\alpha)$ and $L_n^{\Text{SO}}(\alpha)=L_n^{\Text{CO}}(\alpha)$. We have:
\begin{equation*}
  \sum_{\pi\in\Sym(n)}\epsilon(\pi)(L_n^{\Text{SE}}(\pi)-L_n^{\Text{SO}}(\pi))
  = \sum_{\pi\in\Sym(n)}\epsilon(\pi)(L_n^{\Text{CE}}(\pi)-L_n^{\Text{CO}}(\pi))
\end{equation*}
By applying $\pi^{-1}$ to the columns of each Latin squares with $\pi$ as its first column we see that if $n$ is odd then
$\epsilon(\pi)(L_n^{\Text{CE}}(\pi)-L_n^{\Text{CO}}(\pi))=L_n^{\Text{CE}}(\id)-L_n^{\Text{CO}}(\id)$. Thus
\begin{equation*}
  \sum_{\pi\in\Sym(n)}\epsilon(\pi)(L_n^{\Text{SE}}(\pi)-L_n^{\Text{SO}}(\pi))
   = n!(L_n^{\Text{CE}}(\id)-L_n^{\Text{CO}}(\id)).
\end{equation*}
Since exchanging columns of a Latin square does not alter the column parity we have that for each $\beta\in\Sym(n)$ such that $\beta(1)=1$,  $L_n^{\Text{CE}}(\beta,\id)-L_n^{\Text{CO}}(\beta,\id)=L_n^{\Text{CE}}(\id,\id)-L_n^{\Text{CO}}(\id,\id)$. Thus
\begin{equation*}
\begin{split}
  \sum_{\pi\in\Sym(n)}\epsilon(\pi)(L_n^{\Text{SE}}(\pi)-L_n^{\Text{SO}}(\pi))
  & = n!(L_n^{\Text{CE}}(\id)-L_n^{\Text{CO}}(\id))\\
  & = n!\sum_{\substack{\beta\in\Sym(n)\\ \beta(1)=1}}L_n^{\Text{CE}}(\beta,\id)-L_n^{\Text{CO}}(\beta,\id)\\
  & = n!(n-1)!(L_n^{\Text{CE}}(\id,\id)-L_n^{\Text{CO}}(\id,\id))
\end{split}
\end{equation*}

We use the notation $R^{(+,-)}_n$ for the number of reduced Latin squares with even row parity and odd column parity ($R^{(+,+)}_n$, $R^{(-,+)}_n$ and $R^{(-,-)}_n$ are defined accordingly). Since $L_n^{\Text{CE}}(\id,\id)$ is the number of column-even reduced Latin squares, we have:
\begin{equation*}
\begin{split}
  L_n^{\Text{CE}}(\id,\id)-L_n^{\Text{CO}}(\id,\id) & = R^{(+,+)}_n+R^{(-,+)}_n-R^{(+,-)}_n-R^{(-,-)}_n \\
    & =R^{(+,+)}_n-R^{(-,-)}_n.
\end{split}
\end{equation*}
Since
\begin{equation*}
    AT(n)=
    \begin{cases}
       R^{(+,+)}_n-R^{(-,-)}_n,   & \Text{if $n\equiv0,1\pmod{4}$} \\
       R^{(-,-)}_n-R^{(+,+)}_n,   & \Text{if $n\equiv2,3\pmod{4}$},
    \end{cases}
\end{equation*}
by Section 5 in \cite{Zappa97}, the result follows.
\end{proof}
We now have a result, analogous to Theorem~\ref{thm1}, for $AT(n)$:
\begin{thm}\label{thm2}
Let $n$ be odd and let $X=(X_{ij})$ be the $n\times n$ matrix of indeterminates. Then $AT(n)$ is the coefficient of $(-1)^{\frac {n(n-1)}{2}}\prod_{i=1}^n\prod_{j=1}^nX_{ij}$ in $\frac{1}{n!(n-1)!}\per(X)\det(X)^{n-1}$.
\end{thm}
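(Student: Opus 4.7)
The plan is to expand the hybrid product $\per(X)\det(X)^{n-1}$ as a sum over $n$-tuples of permutations, extract the coefficient of the monomial $\prod_{i,j}X_{ij}$, recognise the nonzero terms as Latin squares, and then match the resulting sum to the expression in Lemma~\ref{lem2}. So first I would write
\begin{equation*}
\per(X)\det(X)^{n-1}=\sum_{(\pi_1,\ldots,\pi_n)\in\Sym(n)^n}\epsilon(\pi_2)\cdots\epsilon(\pi_n)\prod_{k=1}^n\prod_{i=1}^n X_{i,\pi_k(i)},
\end{equation*}
the key asymmetry being that $\pi_1$ comes from the permanent and so carries no sign.

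Next I would identify which tuples contribute to the coefficient of $\prod_{i,j}X_{ij}$. Exactly as in Stones's derivation of Theorem~\ref{thm1}, the monomial $\prod_k\prod_i X_{i,\pi_k(i)}$ equals $\prod_{i,j}X_{ij}$ iff for every $(i,j)$ there is a unique $k$ with $\pi_k(i)=j$; that is, iff the array $L$ with $L_{ij}=k$ whenever $\pi_k(i)=j$ is a Latin square whose symbol-$k$ permutation matrix $P_k$ satisfies $\alpha_{P_k}=\pi_k$. Hence the contribution of each Latin square $L$ to the coefficient is $\epsilon(\alpha_{P_2})\cdots\epsilon(\alpha_{P_n})$, and using $\epsilon_{\Text{sym}}(L)=\prod_{k=1}^n\epsilon(\alpha_{P_k})$ this equals $\epsilon(\alpha_{P_1})\,\epsilon_{\Text{sym}}(L)$.

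Summing over $L$ and grouping by the first-symbol permutation $\pi=\alpha_{P_1}$, the coefficient of $\prod_{i,j}X_{ij}$ in $\per(X)\det(X)^{n-1}$ becomes
\begin{equation*}
\sum_{\pi\in\Sym(n)}\epsilon(\pi)\bigl(L_n^{\Text{SE}}(\pi)-L_n^{\Text{SO}}(\pi)\bigr),
\end{equation*}
because for fixed $\pi$ the inner sum of $\epsilon_{\Text{sym}}(L)$ over Latin squares with $\alpha_{P_1(L)}=\pi$ is precisely $L_n^{\Text{SE}}(\pi)-L_n^{\Text{SO}}(\pi)$. By Lemma~\ref{lem2} (which is where the oddness of $n$ enters), this sum equals $(-1)^{n(n-1)/2}n!(n-1)!\,AT(n)$; dividing by $n!(n-1)!$ and moving the sign across proves that $AT(n)$ is the coefficient of $(-1)^{n(n-1)/2}\prod_{i,j}X_{ij}$ in $\tfrac{1}{n!(n-1)!}\per(X)\det(X)^{n-1}$, as required.

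The computational content is almost entirely in the bookkeeping of step~two: one must be careful that the permutation singled out by the $\per$ factor is treated as the symbol-$1$ permutation (unsigned), while the remaining $n-1$ determinant factors contribute the product $\epsilon(\alpha_{P_2})\cdots\epsilon(\alpha_{P_n})$ — this missing sign is exactly what converts $\epsilon_{\Text{sym}}(L)$ into the $\epsilon(\pi)$ factor that triggers Lemma~\ref{lem2}. Once this is set up correctly, the rest is formal, and I do not anticipate further obstacles.
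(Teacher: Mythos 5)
Your proposal is correct and follows essentially the same route as the paper: expand $\per(X)\det(X)^{n-1}$ over $n$-tuples of permutations with the permanent factor unsigned, identify the square-free monomials with Latin squares so that the coefficient of $\prod_{i,j}X_{ij}$ is $\sum_{\pi}\epsilon(\pi)\bigl(L_n^{\Text{SE}}(\pi)-L_n^{\Text{SO}}(\pi)\bigr)$, and conclude by Lemma~\ref{lem2}. Your explicit bookkeeping of the sign conversion $\epsilon(\alpha_{P_2})\cdots\epsilon(\alpha_{P_n})=\epsilon(\alpha_{P_1})\,\epsilon_{\Text{sym}}(L)$ is in fact slightly more carefully spelled out than the paper's own wording.
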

\begin{proof}
For $\Pee\in(S_n)^n$ let $\Pee=(P_1,P_2,\ldots,P_n)$ and for $s=1,\ldots,n$ let $\alpha_s=\alpha_{P_s}$. Expanding $\per(X)$ and $\det(X)$ we obtain
\begin{equation}\label{eq1}
    \per(X)\det(X)^{n-1}=\sum_{\pi\in \Sym(n)}
    \prod X_{i\pi(i)}\sum_{\substack{\Pee\in(S_n)^n\\ \pi=\alpha_1}}\prod_{s=2}^n\epsilon(\alpha_s)\prod_{k=1}^n X_{k\alpha_s(k)}.
\end{equation}
Now, for each $\pi\in\Sym(n)$ the number of square-free terms in
\begin{equation*}
    \prod X_{i\pi(i)}\sum_{\substack{\Pee\in(S_n)^n\\ \pi=\alpha_1}}\prod_{j=2}^n\epsilon(\alpha_j)\prod_{i=1}^n X_{i\alpha_j(i)}
\end{equation*}
is equal to $\epsilon(\pi)(L_n^{\Text{SE}}(\pi)-L_n^{\Text{SO}}(\pi))$. Hence, by (\ref{eq1}), the coefficient of $\prod_{i=1}^n\prod_{j=1}^nX_{ij}$ in $\per(X)\det(X)^{n-1}$ is
\begin{equation*}
    \sum_{\pi\in \Sym(n)}\epsilon(\pi)(L_n^{\Text{SE}}(\pi)-L_n^{\Text{SO}}(\pi)),
\end{equation*}
and the result follows from Lemma~\ref{lem2}.
\end{proof}
We also have an analogue of Theorem~\ref{thm1:5} for $AT(n)$:
\begin{thm}\label{thm3}
Let $B_n$ be the set of $n \times n$ $(0,1)$-matrices. For $A\in B_n$ let $\sigma_0(A)$ be the number of zero elements in $A$. If $n$ is odd then
\begin{equation}\label{eq8:1}
    AT(n)=\frac{(-1)^{\frac{n(n-1)}{2}}}{n!(n-1)!}\sum_{A\in B_n}(-1)^{\sigma_0(A)}\per(A)\det(A)^{n-1}
\end{equation}
\end{thm}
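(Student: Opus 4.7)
The plan is to derive Theorem~\ref{thm3} from Theorem~\ref{thm2} by the same inclusion--exclusion device that turns Theorem~\ref{thm1} into Theorem~\ref{thm1:5}. Set $f(X):=\per(X)\det(X)^{n-1}$; this is a polynomial of total degree $n+n(n-1)=n^2$ in the $n^2$ entries $X_{ij}$, and Theorem~\ref{thm2} identifies $AT(n)$, up to the prefactor $(-1)^{n(n-1)/2}/(n!(n-1)!)$, with the coefficient of $\prod_{i,j}X_{ij}$ in $f$. So the task reduces to proving the coefficient-extraction identity
\begin{equation*}
  \bigl[{\textstyle\prod_{i,j}}X_{ij}\bigr]\,f(X)\;=\;\sum_{A\in B_n}(-1)^{\sigma_0(A)}f(A).
\end{equation*}

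For this I would expand $f$ in the monomial basis, $f(X)=\sum_m c_m\,X^m$, and note that evaluating at a $(0,1)$-matrix $A$ yields $f(A)=\sum_m c_m\bigl[\Text{supp}(m)\subseteq \Text{ones}(A)\bigr]$, where $\Text{ones}(A)$ denotes the set of $1$-positions of $A$. Interchanging the two sums, every monomial $m$ whose support $S$ has size $s<n^2$ contributes $\sum_{T\supseteq S}(-1)^{n^2-|T|}=0$ by a routine binomial identity, so only monomials of full support survive. Since $f$ is homogeneous of degree exactly $n^2$, the unique such monomial is $\prod_{i,j}X_{ij}$ itself, whose coefficient is precisely what we want, and multiplying by $(-1)^{n(n-1)/2}/(n!(n-1)!)$ yields the claimed formula.

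I do not expect a real obstacle here: the one place where the structure of the problem enters is the degree balance $\deg f=n^2$, which is exactly what forces every surviving monomial to be square-free. The ``hybrid'' construction of Theorem~\ref{thm2}, using one permanent together with $n-1$ determinants, is tailored to precisely this degree, and all of the substantive combinatorial content is already contained in Lemma~\ref{lem2}. Theorem~\ref{thm3} is then a purely formal consequence of Theorem~\ref{thm2}, entirely parallel to Stones's derivation of Theorem~\ref{thm1:5} from Theorem~\ref{thm1}.
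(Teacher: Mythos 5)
Your proposal is correct and rests on the same mechanism as the paper's own proof: the alternating sum over $(0,1)$-matrices annihilates every monomial whose support omits some cell, and homogeneity of $\per(X)\det(X)^{n-1}$ in degree $n^2$ forces the sole survivor to be $\prod_{i,j}X_{ij}$, so Theorem~\ref{thm3} follows from Theorem~\ref{thm2}. The paper packages the cancellation as a sign-reversing toggle on pairs $(A,\mathcal{P})$ in the style of Stones and then re-invokes Lemma~\ref{lem2} directly, whereas you phrase it as a general coefficient-extraction identity and cite Theorem~\ref{thm2}; the substance is identical.
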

\begin{proof}
Most of the proof is similar to Stones' proof of Theorem~\ref{thm1:5}. By (\ref{eq1}),
\begin{equation}\label{eq9}
    \sum_{A\in B_n}(-1)^{\sigma_0(A)}\per (A)\det(A)^{n-1}=\sum_{(A,\Pee)\in B_n\times (S_n)^n}Z(A,\Pee)
\end{equation}
where
\begin{equation*}
    Z(A,\Pee)=(-1)^{\sigma_0(A)}\prod_{i=1}^n A_{i\alpha_1(i)}\prod_{s=2}^n\epsilon(\alpha_s)\prod_{k=1}^n A_{k\alpha_s(k)}.
\end{equation*}
If for $(A,\Pee)$ there exists $i,j\in [n]$ such that $(P_s)_{ij}=0$ for all $s=1,\ldots,n$, then let $A^c$ be the matrix formed by toggling $A_{ij}$ in the lexicographically first such coordinate $ij$. Thus $Z(A,\Pee)=-Z(A^c,\Pee)$ and these two terms cancel in the sum in (\ref{eq9}). So, on the right hand side of (\ref{eq9}) we are left only with $\sum_{\Pee\in S^*}\prod_{s=2}^n \epsilon(P_s)$, where $S^*=\{(P_1,\ldots,P_n):\sum_{s=1}^n sP_s \Text{ is a Latin square}\}$ and $A$ is the all-1 matrix. Now,
\begin{equation*}
\begin{split}
  \sum_{\Pee\in S^*}\prod_{s=2}^n \epsilon(\alpha_s) & = \sum_{\pi\in\Sym(n)}\epsilon(\pi)\sum_{\substack{\Pee\in S^* \\\alpha_{P_1}=\pi}}\prod_{s=1}^n \epsilon(\alpha_s)\\ & = \sum_{\pi\in\Sym(n)}\epsilon(\pi)\sum_{\substack{\Pee\in S^* \\\alpha_{P_1}=\pi}}\epsilon_\Text{sym}\left(\sum_{s=1}^n sP_s\right) \\
    & = \sum_{\pi\in\Sym(n)}\epsilon(\pi)(L_n^{\Text{SE}}(\pi)-L_n^{\Text{SO}}(\pi)),
\end{split}
\end{equation*}
and the result follows from Lemma~\ref{lem2}.
\end{proof}
% ----------------------------------------------------------------
\section{An alternative proof of Drisko's theorem}\label{section3}
The main result of this section (Corollary~\ref{cor1}) was first proved by Drisko \cite{Drisko98}. An alternative proof, based on the results of Section~\ref{section2}, is presented here. I am indebted to an anonymous reviewer for suggesting this proof.

In this section the rows and columns of an $n \times n$ matrix will be indexed by the numbers $0,1,\ldots,n-1$.
\begin{defn}
Let $A$ be an $n\times n$ matrix and Let $B$ be a subset of cells of $A$. Let $k$ be an integer. The $k$-left shift of $B$ is the set of cells $\{b_{i,(j-k)\bmod n}:b_{i,j}\in B\}$. The $k$-down shift of $B$ is the set of cells $\{b_{(i+k)\bmod n,j}:b_{i,j}\in B\}$.
\end{defn}

\begin{defn}
An $n\times n$ matrix $A$ will be said to be $k$-left row shifted, for $0<k<n$, if for all $i=1,\ldots,n-1$, the $i^{\Text{th}}$ row of $A$ is equal to the $k$-left shift of the $(i-1)^{\Text{st}}$ row, and the $0^{\Text{th}}$ row is equal to the $k$-left shift of the $(n-1)^{\Text{st}}$ row.
\end{defn}
\begin{rem}\label{rem1}
If $p$ is an odd prime and $A$ is a $p\times p$ $k$-left row shifted matrix, then the set of cells of $A$ is the disjoint union of $p$ diagonals, where the elements of each diagonal are all equal. These diagonals will be referred to as the \emph{principal diagonals} of $A$.
\end{rem}
\begin{lem}\label{lem3}
Let $p$ be an odd prime. Let $A$ be a $p\times p$ $k$-left row shifted (0,1)-matrix. Let $\mathbf{b}$ be the first row of $A$ and let $|\mathbf{b}|$ be the number of 1's in $\mathbf{b}$. Then
\begin{enumerate}
  \item [\Text{(i)}] $\per(A)\equiv|\mathbf{b}| \pmod{p}$
  \item [\Text{(ii)}] $\det(A)\equiv \pm|\mathbf{b}|\pmod{p}$
\end{enumerate}
\end{lem}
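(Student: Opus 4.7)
The plan is to exploit the cyclic structure of $A$ by introducing a $\mathbb{Z}/p$-action on $\Sym(p)$ that permutes the terms in the permanent and determinant expansions while preserving both the value and the sign of each monomial. Since $p$ is prime, every non-trivial orbit then contributes $0$ modulo $p$, and only a small explicit set of fixed points survives, giving both congruences after a one-line evaluation.

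First I would unravel the shift definition to write $A_{i,j} = b_{(j+ik)\bmod p}$, where $b_0,\ldots,b_{p-1}$ are the entries of $\mathbf{b}$. Then I would define $\phi\colon\Sym(p)\to\Sym(p)$ by $\phi(\sigma)(i) := \sigma(i+1)+k \pmod p$. Writing $\rho(i)=i+1$ and $\tau_k(j)=j+k$ (both mod $p$), we have $\phi(\sigma) = \tau_k\circ\sigma\circ\rho$, and a short induction gives $\phi^m(\sigma)(i)=\sigma(i+m)+mk$, so $\phi^p=\id$ and $\phi$ generates a $\mathbb{Z}/p$-action. Two invariances now need checking. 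For the monomial value,
\[
A_{i,\phi(\sigma)(i)} = b_{\sigma(i+1)+k+ik} = b_{\sigma(i+1)+(i+1)k} = A_{i+1,\sigma(i+1)},
\]
so reindexing gives $\prod_i A_{i,\phi(\sigma)(i)} = \prod_i A_{i,\sigma(i)}$. For the sign, $\rho$ and $\tau_k$ are $p$-cycles (or the identity), and since $p$ is odd they have sign $(-1)^{p-1}=1$; hence $\epsilon(\phi(\sigma))=\epsilon(\sigma)$.

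Next, since $p$ is prime every $\phi$-orbit has size $1$ or $p$, so modulo $p$ only fixed points contribute to either $\per(A)=\sum_\sigma \prod_i A_{i,\sigma(i)}$ or $\det(A)=\sum_\sigma \epsilon(\sigma)\prod_i A_{i,\sigma(i)}$. The equation $\phi(\sigma)=\sigma$ reads $\sigma(i+1)=\sigma(i)-k$, whose solutions are $\sigma_c(i):=c-ik\bmod p$ for $c\in\{0,\ldots,p-1\}$ (they are permutations because $\gcd(k,p)=1$); these are precisely the $p$ principal-diagonal permutations of Remark~\ref{rem1}. For each of them, $\prod_i A_{i,\sigma_c(i)} = \prod_i b_c = b_c$ since $b_c\in\{0,1\}$, which immediately yields $\per(A)\equiv \sum_c b_c = |\mathbf{b}|\pmod p$ and proves~(i). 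For (ii), I would note that $\sigma_{c+1}=\tau_1\circ\sigma_c$ with $\tau_1$ a $p$-cycle of sign $+1$ (again using $p$ odd), so $\epsilon(\sigma_c)$ is the same constant $\eta\in\{\pm 1\}$ for all $c$, and thus $\det(A)\equiv \eta|\mathbf{b}|\pmod p$.

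The only real subtlety is finding an action that preserves \emph{both} the value and the sign of each monomial at the same time. The left-shift-by-$k$ and the cyclic right-shift together are forced by the requirement of value invariance; the sign invariance then comes for free from $p$ being odd. Dropping either half of $\phi$ would lose one of the two congruences, so part~(ii) is what pins down the exact form of the action.
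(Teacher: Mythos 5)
Your proof is correct and is essentially the same argument as the paper's: the paper defines a map $s$ on diagonals (a $k$-left shift followed by a $1$-down shift) whose orbits have size $1$ or $p$, identifies the fixed points with the principal diagonals, and checks that the shift preserves both the entries and (via composition with powers of the $p$-cycle $\nu$, which is even) the sign; your $\phi=\tau_k\circ\sigma\circ\rho$ is exactly this map written in the language of permutations rather than diagonals. The only cosmetic difference is that you make the fixed-point permutations $\sigma_c(i)=c-ik$ and the orbit-counting fully explicit, which the paper leaves slightly more informal.
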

\begin{proof}
Part (i) can be easily obtained from Ryser's permanent formula (\cite{Rys63}, see also \url{http://mathworld.wolfram.com/RyserFormula.html}). However, a different approach, that will also apply to Part (ii), is used here. We define a mapping $s$ on the set of diagonals of $A$ as follows: For a diagonal $d$ in $A$, $s(d)$ is obtained by taking the $k$-left shift of $d$ and then taking the 1-down shift of the result. Note that the fixed points of $s$ are exactly the principal diagonals defined in Remark~\ref{rem1}.
The mapping $s$ is a bijection and, since $A$ is $k$-left row shifted, $s(d)$ contain the same set of values as $d$. In particular, if $d$ consists only of 1's, so does $s(d)$.
Also note that $s^p(d)=d$ for all $d$ and thus, since $p$ is prime, each orbit under $s$ is of size one or $p$.
As mentioned above, the orbits of size one are those containing the principal diagonal. Thus, $\per(A)\bmod{p}$ is equal to the number of principal diagonals consisting only of 1's, and since there are $|\mathbf{b}|$ such diagonal Part (i) follows.

For Part (ii), it remains to show that all principal diagonals correspond to permutations of the same parity and that $s$ preserves the parity of the permutation corresponding to the diagonal acted upon.
Let $d_1$ and $d_2$ be two diagonals, such that $d_1$ is the $k$-left shift of $d_2$.
This means that if $\pi_1$ and $\pi_2$ are the corresponding permutations, then $\pi_2=\nu^k \circ\pi_1$ (application from right to left), where $\nu=(12\ldots p)$, which is an even permutation, since $p$ is odd. If $d_1$ and $d_2$ are principal diagonals then $d_1$ is the $k$-left shift of $d_2$ for some $k$. Thus, all fixed diagonals correspond to permutations of the same parity.
If $d_1$ is the $k$-down shift of $d_2$, then the corresponding permutations satisfy $\pi_1=\pi_2\circ\nu^k$. Since $s$ consists of a left shift and a down shift, $s$ preserves the parity. This proves (ii).
\end{proof}
\begin{thm}\label{thm3:1}
Let $p$ be an odd prime. Let $B_p$ be the set of $p \times p$ (0,1)-matrices. Then
\begin{equation}\label{eq3:1}
    \frac{1}{p}\sum_{A\in B_p}(-1)^{\sigma_0(A)}\per(A)\det(A)^{p-1}\equiv-1\pmod{p}.
\end{equation}
\end{thm}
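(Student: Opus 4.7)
The plan is to exploit the action of $G = \mathbb{Z}_p \times \mathbb{Z}_p$ on $B_p$ by simultaneous cyclic row and column shifts. The summand $f(A) := (-1)^{\sigma_0(A)}\per(A)\det(A)^{p-1}$ is $G$-invariant, since $\sigma_0$ and $\per$ are invariant under any row or column permutation, and $\det^{p-1}$ is too because $p-1$ is even. Each $G$-orbit therefore has size $1$, $p$, or $p^2$; the size-$1$ orbits are the two constant matrices $\mathbf{0}$ and $\mathbf{1}$, both with $\det=0$ so $f=0$, while the size-$p^2$ orbits contribute multiples of $p^2$. Hence
\[
\sum_{A\in B_p} f(A) \;\equiv\; p\cdot N \pmod{p^2}, \qquad N := \sum_{\substack{G\text{-orbit }O\\|O|=p}} f(A_O),
\]
and the theorem reduces to the congruence $N \equiv -1 \pmod{p}$.

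Next I would classify the size-$p$ orbits: they correspond to matrices whose stabilizer is one of the $p+1$ order-$p$ subgroups of $G$, namely $\langle(1,k)\rangle$ for $k=0,\dots,p-1$ and $\langle(0,1)\rangle$. The fixed sets of $\langle(1,0)\rangle$ (all rows equal) and $\langle(0,1)\rangle$ (each row constant) consist of matrices with repeated rows, hence $\det=0$ and $f=0$. For each $k\in\{1,\dots,p-1\}$ the fixed set of $\langle(1,-k)\rangle$ is precisely the family of $k$-left row shifted $(0,1)$-matrices $\{A^{(k)}_\mathbf{b}:\mathbf{b}\in\{0,1\}^p\}$. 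Since $A^{(k)}_\mathbf{b}$ is obtained from the circulant $C_\mathbf{b}:=A^{(1)}_\mathbf{b}$ by the row permutation $i\mapsto ki\pmod p$, and row permutations preserve $\sigma_0$, $\per$, and $\det^{p-1}$, we have $f(A^{(k)}_\mathbf{b})=f(C_\mathbf{b})$. Summing the $p-1$ non-trivial subgroup contributions,
\[
N \;=\; \frac{p-1}{p}\sum_{\mathbf{b}\in\{0,1\}^p} f(C_\mathbf{b}).
\]
The division by $p$ is legitimate because Lemma~\ref{lem3} combined with Fermat's little theorem yields $f(C_\mathbf{b})\equiv -(-1)^{|\mathbf{b}|}|\mathbf{b}|\pmod p$, so $\sum_\mathbf{b} f(C_\mathbf{b}) \equiv -\sum_{m=0}^{p}\binom{p}{m}(-1)^m m \equiv 0\pmod p$ by the standard identity $\sum_m m\binom{p}{m}(-1)^m=0$.

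The main obstacle is the remaining step: evaluating $\frac{1}{p}\sum_\mathbf{b} f(C_\mathbf{b})$ modulo $p$, which requires knowing $\sum_\mathbf{b} f(C_\mathbf{b})$ modulo $p^2$. Showing this quotient is $\equiv 1\pmod p$ will then give $N\equiv(p-1)\cdot 1\equiv -1\pmod p$ and finish the proof. Two possible routes: (i) refine Lemma~\ref{lem3} to modulo $p^2$ by tracking the non-principal $s$-orbits of diagonals more carefully (each such orbit has size $p$ and its contribution modulo $p^2$ is expressible as a product of entries of $\mathbf{b}$ that can be summed in closed form); or (ii) expand $\per(X)\det(X)^{p-1}$ symbolically as a sum over tuples $(\sigma_0,\dots,\sigma_{p-1})\in\Sym(p)^p$, apply the identity $\sum_\mathbf{b}(-1)^{|\mathbf{b}|}\prod_\ell b_\ell^{c_\ell}$, which equals $-1$ when all $c_\ell\ge 1$ and $0$ otherwise, to reduce the problem to a signed count of tuples whose combined displacement set covers $\mathbb{Z}_p$, and then use a further cyclic action (e.g., simultaneous post-composition by a $p$-cycle) on such tuples to extract the value modulo $p^2$.
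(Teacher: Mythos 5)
Your setup is the same as the paper's: the action of $G=\langle\nu\rangle\times\langle\nu\rangle$ on $B_p$, invariance of $f(A)=(-1)^{\sigma_0(A)}\per(A)\det(A)^{p-1}$, discarding the size-$p^2$ orbits modulo $p^2$ and the degenerate stabilizers via $\det=0$, and the identification of the remaining matrices as the $k$-left row shifted ones. The genuine gap is that you stop at what you call the ``main obstacle,'' and the obstacle is a misdiagnosis: you do \emph{not} need $\sum_{\mathbf{b}}f(C_{\mathbf{b}})$ modulo $p^2$. If $\mathbf{b}'$ is a cyclic shift of $\mathbf{b}$ then $C_{\mathbf{b}'}$ lies in the same $G$-orbit as $C_{\mathbf{b}}$, so $f$ is constant on cyclic-shift classes of first rows; every non-constant class has exactly $p$ elements, and the constant classes contribute $0$. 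Hence $\frac{1}{p}\sum_{\mathbf{b}}f(C_{\mathbf{b}})$ is literally the integer $\sum_{\text{classes}}f(C_{\mathbf{b}})$, which you may reduce term by term modulo $p$ using Lemma~\ref{lem3}. Concretely, the number of classes of weight $a$ is $\binom{p}{a}/p$, and $\frac{1}{p}\binom{p}{a}\,a=\binom{p-1}{a-1}$ is an integer, so
\begin{equation*}
\frac{1}{p}\sum_{\mathbf{b}}f(C_{\mathbf{b}})\equiv\sum_{a=1}^{p-1}\binom{p-1}{a-1}\cdot\Bigl(\pm(-1)^{a}\Bigr)\pmod{p},
\end{equation*}
which the alternating-sum identity evaluates outright. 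This is exactly how the paper closes the argument (it phrases the division as $\frac{1}{p}\sum_a\binom{p}{a}(p-1)(-1)^aa$ and invokes $\sum_a\binom{p}{a}(-1)^aa=0$); neither of your proposed routes (i) or (ii) is needed.

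One further point you should not gloss over: your congruence $f(C_{\mathbf{b}})\equiv-(-1)^{|\mathbf{b}|}|\mathbf{b}|\pmod p$ is the correct one, since $\sigma_0(A)=p(p-|\mathbf{b}|)\equiv|\mathbf{b}|+1\pmod 2$ (e.g.\ a weight-one circulant is a permutation matrix with $\sigma_0$ even and $f=1$), whereas the paper's proof uses $\sigma_0(A)\equiv|\mathbf{b}|\pmod 2$. Carrying your sign through the computation above gives $N\equiv(p-1)\cdot(-1)\equiv+1\pmod p$, i.e.\ the left side of (\ref{eq3:1}) comes out $+1$, not $-1$. This is corroborated for $p=3$: Theorem~\ref{thm3} with $AT(3)=-1$ gives $\sum_{A\in B_3}f(A)=12$, so $\frac{1}{3}\sum_A f(A)=4\equiv1\pmod 3$. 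So if you complete your argument honestly you will prove the statement with $-1$ replaced by $+1$; the discrepancy is absorbed in the proof of Corollary~\ref{cor1} by the factor written there as $(-1)/(n-1)!^2\equiv-1$, which should be $1/(p-1)!^2\equiv1$ by Wilson's theorem, so Drisko's congruence is unaffected.
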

\begin{proof}
Define the group $G=\langle\nu\rangle\times\langle\nu\rangle$, where $\nu=(12\cdots p)$. The group $G$ acts on $B_p$ by permuting the rows and columns, so that for each element of $G$, its first component permutes the order of the rows and the second component permutes the order of the columns. By The Orbit-Stabilizer Theorem, an orbit has size $|G|=p^2$ unless each of its elements has a non-trivial stabilizer in $G$. If $g=(\nu^i,\nu^j)$ is a stabilizer of $A\in B_p$, so is any of its powers, including $(\nu,\nu^k)$ for some $k$, since $p$ is prime. Thus, an orbit has size smaller than $p^2$ if and only if for each matrix $A$ in that orbit there exists some $0<k<p$ for which $(\nu,\nu^k)A=A$. Let
\begin{equation*}
    D=\{A\in B_p| (\nu,\nu^k)A=A\Text{ for some }0<k<p\}.
\end{equation*}
The action of $G$ preserves $\sigma_0$ and, since $\nu$ is an even permutation, it also preserves the permanent and the determinant. We have
\begin{equation*}
    \frac{1}{p}\sum_{A\in B_p}(-1)^{\sigma_0(A)}\per(A)\det(A)^{p-1}\equiv
    \frac{1}{p}\sum_{A\in D}(-1)^{\sigma_0(A)}\per(A)\det(A)^{p-1}\pmod{p}.
\end{equation*}
Hence, it suffices to prove (\ref{eq3:1}) with ``$B_p$'' replaced by ``$D$''.

Suppose $(\nu,\nu^k)A=A$. Then, after applying $\nu^k$ to the $i^{\Text{th}}$ row the $(i+1)^{\Text{st}}$ is obtained, for $i=0,\dots,p-2$ and applying $\nu^k$ to the $(p-1)^{\Text{st}}$ row yields the $0^{\Text{th}}$ row.
This implies that $A$ is a $(p-k)$-left row shifted matrix.
Thus, $A$ is uniquely determined by its first row $\mathbf{b}$ and the number $k$.
We denote this by $A=A(\mathbf{b},k)$.

Now, suppose $A=A(\mathbf{b},k)$ is not the all-1 matrix and let $a=|\mathbf{b}|$. Since $p$ is odd, $\sigma_0(A)\equiv a\pmod{2}$. Then, by Lemma~\ref{lem3} and Fermat's Little Theorem,
$(-1)^{\sigma_0(A)}\per(A)\det(A)^{p-1}\equiv(-1)^aa\pmod{p}$.
For a fixed $a\in\{1,\ldots,p-1\}$, the number of distinct matrices $A(\mathbf{b},k)$ with $|\mathbf{b}|=a$ is $\binom{p}{a}(p-1)$. Therefore,
\begin{equation*}
    \frac{1}{p}\sum_{A\in D}(-1)^{\sigma_0(A)}\per(A)\det(A)^{p-1}\equiv
    \frac{1}{p}\sum_{a=1}^{p-1}\binom{p}{a}(p-1)(-1)^aa\pmod{p},
\end{equation*}
where the cases that $a\in\{0,p\}$ have been discarded since they correspond to the all-0 and all-1 matrices, which have zero determinant. The result now follows from the binomial identity
\begin{equation*}
    \sum_{a=0}^{p}\binom{p}{a}(-1)^aa=0
\end{equation*}
(see \url{http://en.wikipedia.org/wiki/Binomial_coefficient}).
\end{proof}
The following result was first proved by Drisko \cite{Drisko98}:
\begin{cor}\label{cor1}
If $p$ is an odd prime, then
\begin{equation*}
    AT(p)\equiv (-1)^{\frac{p-1}{2}}\pmod{p}.
\end{equation*}
\end{cor}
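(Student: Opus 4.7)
The plan is to derive the corollary by reducing the identity of Theorem~\ref{thm3} modulo $p$, substituting Theorem~\ref{thm3:1} for the combinatorial sum and using Wilson's theorem to handle the factorial denominator. Setting $n=p$ in Theorem~\ref{thm3} and using $p!(p-1)! = p\cdot((p-1)!)^2$, I would first rearrange the formula into
\[
((p-1)!)^2 \cdot AT(p) \;=\; (-1)^{p(p-1)/2}\cdot \frac{1}{p}\sum_{A\in B_p}(-1)^{\sigma_0(A)}\per(A)\det(A)^{p-1},
\]
canceling one factor of $p$ from the factorial against the factor of $p$ implicit on the right-hand side (Theorem~\ref{thm3:1} asserts that this sum is divisible by $p$).

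Three simplifications then happen in parallel when reducing modulo $p$. First, since $p$ is odd, $(-1)^{p(p-1)/2}=(-1)^{(p-1)/2}$ by a one-line parity check (the parity of $p(p-1)/2$ coincides with that of $(p-1)/2$ when $p$ is odd). Second, Wilson's theorem $(p-1)!\equiv -1\pmod p$ gives $((p-1)!)^2\equiv 1 \pmod p$, so the left coefficient collapses. Third, Theorem~\ref{thm3:1} identifies the normalized sum on the right as a specific residue modulo $p$. Assembling these three inputs yields the desired congruence for $AT(p)$ modulo $p$.

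The argument is essentially algebraic substitution, with no real conceptual obstacle; the only care required is sign bookkeeping, so that the sign $(-1)^{p(p-1)/2}$ from Theorem~\ref{thm3}, the sign supplied by Theorem~\ref{thm3:1}, and the squared Wilson factor combine to produce exactly $(-1)^{(p-1)/2}$ on the nose.
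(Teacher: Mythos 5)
Your outline retraces the paper's own route step for step: set $n=p$ in Theorem~\ref{thm3}, peel the factor $p$ off $p!(p-1)!$, apply Theorem~\ref{thm3:1} to the normalized sum, and dispose of $((p-1)!)^{2}$ by Wilson's theorem. The difficulty is precisely the step you wave through as ``sign bookkeeping'': carried out honestly, your three inputs give
\[
AT(p)\;\equiv\;\bigl((p-1)!\bigr)^{-2}\cdot(-1)^{(p-1)/2}\cdot(-1)\;\equiv\;-(-1)^{(p-1)/2}\pmod{p},
\]
which is the \emph{negation} of the congruence you set out to prove. The signs do not ``combine to produce exactly $(-1)^{(p-1)/2}$ on the nose''; there is a stray $-1$ that your write-up never accounts for, and asserting the conclusion at that point is a genuine gap rather than a routine omission.

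The source of the discrepancy is not your algebra but the statement of Theorem~\ref{thm3:1}, which is off by a sign: in its proof, a $k$-left row shifted $(0,1)$-matrix with $a$ ones in each row has $\sigma_0(A)=p(p-a)\equiv a+1\pmod 2$, not $\equiv a$ as claimed there, so the quantity in (\ref{eq3:1}) is in fact $\equiv +1\pmod p$. One can verify this directly for $p=3$: the six shifted permutation matrices contribute $+6$, the six matrices with two ones per row contribute $6\cdot(-1)\cdot 2\cdot(-2)^{2}=-48$, and $\tfrac{1}{3}(6-48)=-14\equiv 1\pmod 3$, while $AT(3)=-1=(-1)^{(3-1)/2}$ confirms the corollary itself. (The paper's own proof of the corollary conceals the same issue by inserting an unexplained factor $\tfrac{-1}{(n-1)!^{2}}$ where the rearrangement of (\ref{eq8:1}) yields only $\tfrac{1}{(n-1)!^{2}}$.) So your approach is the right one and is the paper's, but as written it proves the wrong sign; to repair it you must either correct the right-hand side of Theorem~\ref{thm3:1} to $+1$ or otherwise exhibit the compensating $-1$ explicitly.
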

\begin{proof}
When $n=p$ is an odd prime we can rearrange (\ref{eq8}) to obtain
\begin{equation*}
\begin{split}
  AT(p) & = (-1)^{\frac{p-1}{2}}\times\frac{(-1)}{(n-1)!^2}\times\frac{1}{p}
  \sum_{A\in B_p}(-1)^{\sigma_0(A)}\per(A)\det(A)^{p-1}\\
    & \equiv (-1)^{\frac{p-1}{2}}\times (-1)\times(-1)\pmod{p},
\end{split}
\end{equation*}
by Theorem~\ref{thm3:1}. The result follows.
\end{proof}
% ----------------------------------------------------------------
\section{Linking conjectures~\ref{conj1} and \ref{conj2}}\label{section4}
The following statement is obtained as part of a proof in \cite{Kot2011}:
\begin{prop}\label{prop2}
Let $n$ be odd and let $A_1,A_2,\ldots,A_n$ be $n\times n$ matrices over a field. Then
\begin{equation}\label{eq30}
    \sum_{\substack{\rho,\sigma\in\Sym(n)^n\\\rho_1=\id}}
\epsilon(\sigma_1)\epsilon(\sigma)\epsilon(\rho)\prod_{i,j=1}^n (A_j)_{\sigma_i(j),\rho_j(i)}=(n-1)!\cdot (R_n^\Text{E}-R_n^\Text{O})\per(A_1)\prod_{j=2}^n \det(A_j).
\end{equation}
\end{prop}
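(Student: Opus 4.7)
The plan is to evaluate the left-hand side of \eqref{eq30} by carrying out the sums iteratively: first over $\rho$, then over $\sigma$, so as to reduce the expression to a sum over Latin squares which can then be folded onto reduced ones.

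First, I would fix $\sigma$ and sum over $\rho$. Because $\rho_1=\id$, the $j=1$ factor is $\prod_i (A_1)_{\sigma_i(1),i}$, independent of $\rho$. For each $j\ge 2$, setting $\tau_j(i):=\sigma_i(j)$, the inner factor
\begin{equation*}
\sum_{\rho_j\in\Sym(n)}\epsilon(\rho_j)\prod_{i=1}^n (A_j)_{\tau_j(i),\rho_j(i)}
\end{equation*}
is a determinant-type sum: when $\tau_j$ is a bijection, the change of variable $\rho_j=\pi\circ\tau_j$ identifies it with $\epsilon(\tau_j)\det(A_j)$, and otherwise it vanishes (two equal rows in the associated matrix). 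Hence only those $\sigma$'s for which every $\tau_j$, $j\ge 2$, is a permutation contribute, and combined with the fact that each $\sigma_i$ is already a permutation, this forces the array $S$ with $S_{ij}:=\sigma_i(j)$ to be a Latin square of order $n$.

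Next, I would repackage the surviving sign factors as Latin-square data. Writing $\alpha$ for the first row of $S$ and $\beta$ for the first column, one has $\epsilon(\sigma_1)\epsilon(\sigma)=\prod_{i\ge 2}\epsilon(\sigma_i)=\epsilon(\alpha)\cdot\Text{row-sign}(S)$ and $\prod_{j\ge 2}\epsilon(\tau_j)=\epsilon(\beta)\cdot\Text{col-sign}(S)$. Since $\Text{row-sign}(S)\cdot\Text{col-sign}(S)=\epsilon(S)$, the left-hand side collapses to
\begin{equation*}
\prod_{j=2}^n\det(A_j)\cdot\sum_S\epsilon(\alpha)\epsilon(\beta)\epsilon(S)\prod_{i=1}^n (A_1)_{\beta(i),i},
\end{equation*}
the sum now running over all Latin squares $S$ of order $n$.

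Finally, I would group the Latin-square sum by the compatible pair $(\alpha,\beta)$ with $\alpha(1)=\beta(1)$. For each such pair, relabeling the symbols of $S$ by $\alpha^{-1}$ followed by permuting its rows by $(\alpha^{-1}\beta)^{-1}$ is a bijection onto the set of reduced Latin squares. Tracking how row-signs and column-signs transform under these two operations, and using that $n$ is odd so $\epsilon(\gamma)^n=\epsilon(\gamma)$ for every $\gamma\in\Sym(n)$, one finds $\epsilon(S)=\epsilon(\alpha)\epsilon(\beta)\epsilon(R)$ for the reduction $R$. The inner sum then equals $\epsilon(\alpha)\epsilon(\beta)(R_n^\Text{E}-R_n^\Text{O})$; the prefactors $\epsilon(\alpha)^2\epsilon(\beta)^2$ collapse to $1$; and summing over the $(n-1)!$ choices of $\alpha$ for each $\beta$, followed by $\sum_\beta\prod_i (A_1)_{\beta(i),i}=\per(A_1)$, delivers the claimed identity. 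The hard part is the sign bookkeeping in this final step: the oddness of $n$ is used essentially, because each of the two reduction operations must contribute exactly one factor of $\epsilon(\alpha)$ or $\epsilon(\beta)$ to $\epsilon(S)$, and for even $n$ these contributions would disappear, producing a different formula.
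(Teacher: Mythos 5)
Your argument is correct and complete. Note that the paper itself supplies no proof of Proposition~\ref{prop2} --- it is quoted as ``obtained as part of a proof in \cite{Kot2011}'' --- so your derivation is a self-contained substitute rather than a variant of an internal argument. All three stages check out: for $j\ge 2$ the $\rho_j$-sum is the determinant of the matrix with $(i,\ell)$ entry $(A_j)_{\tau_j(i),\ell}$, which vanishes when $\tau_j$ is not injective (repeated rows) and equals $\epsilon(\tau_j)\det(A_j)$ otherwise via $\rho_j\mapsto\rho_j\circ\tau_j^{-1}$; since every row $\sigma_i$ is a permutation and columns $2,\ldots,n$ are forced to be permutations, column $1$ is automatically one as well, so the survivors are exactly Latin squares; and the reduction $S\mapsto R$ (relabel symbols by $\alpha^{-1}$, then permute rows by $(\alpha^{-1}\beta)^{-1}$) is a bijection onto reduced squares satisfying $\epsilon(S)=\epsilon(\alpha)\epsilon(\beta)\epsilon(R)$ for odd $n$, after which the prefactor $\epsilon(\alpha)^2\epsilon(\beta)^2=1$, the $(n-1)!$ choices of $\alpha$ per $\beta$, and $\sum_\beta\prod_i(A_1)_{\beta(i),i}=\per(A_1)$ give the right-hand side. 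The only imprecision is your closing remark that \emph{each} of the two reduction operations contributes one factor of $\epsilon(\alpha)$ or $\epsilon(\beta)$: in fact the symbol relabeling multiplies $\epsilon(S)$ by $\epsilon(\alpha)^{2n}=1$ (it rescales the sign of all $n$ rows and all $n$ columns), while the row permutation alone contributes $\epsilon(\alpha^{-1}\beta)^n=\epsilon(\alpha)\epsilon(\beta)$, which is where the oddness of $n$ is genuinely used; the net identity you invoke is nonetheless the correct one, so this does not affect the proof.
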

Here $\rho_1$ and $\sigma_1$ are the first components in $\rho$ and $\sigma$ respectively. Combining Proposition~\ref{prop2} with Theorem~\ref{thm2} yields the following identity, linking $AT(n)$ and $R_n^\Text{E}-R_n^\Text{O}$:
\begin{thm}\label{thm4:1}
Let $X=(X_{ij})$ be an $n \times n$ matrix of indeterminates. Then $AT(n)\cdot(R_n^\Text{E}-R_n^\Text{O})$ is the coefficient of $(-1)^\frac{n(n-1)}{2}\prod_{i=1}^n\prod_{j=1}^nX_{ij}$ in
\begin{equation*}
    \frac{1}{n!(n-1)!^2}\sum_{\substack{\rho,\sigma\in\Sym(n)^n\\\rho_1=\id}}
\epsilon(\sigma_1)\epsilon(\sigma)\epsilon(\rho)\prod_{i,j=1}^n X_{\sigma_i(j),\rho_j(i)}.
\end{equation*}
\end{thm}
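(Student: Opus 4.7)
The plan is to specialize Proposition~\ref{prop2} to the case $A_1 = A_2 = \cdots = A_n = X$ (the matrix of indeterminates) and then read off the coefficient of the square-free monomial via Theorem~\ref{thm2}. Since both prior results assume $n$ odd, Theorem~\ref{thm4:1} is to be understood under that hypothesis as well.

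First I would substitute $A_j = X$ for every $j \in [n]$ into identity \eqref{eq30}. The left-hand side is then exactly the sum appearing in the statement of Theorem~\ref{thm4:1} (with the indeterminate entries $X_{\sigma_i(j), \rho_j(i)}$), while the right-hand side collapses to
\begin{equation*}
    (n-1)! \cdot (R_n^\Text{E} - R_n^\Text{O}) \cdot \per(X) \cdot \det(X)^{n-1}.
\end{equation*}

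Next I would divide both sides by $n!(n-1)!^2$, which matches the normalization appearing in the theorem. Using $n!(n-1)!^2 = n!(n-1)! \cdot (n-1)!$, the right-hand side simplifies to
\begin{equation*}
    (R_n^\Text{E} - R_n^\Text{O}) \cdot \frac{1}{n!(n-1)!}\per(X)\det(X)^{n-1}.
\end{equation*}
By Theorem~\ref{thm2}, the coefficient of $(-1)^{n(n-1)/2}\prod_{i,j}X_{ij}$ in $\frac{1}{n!(n-1)!}\per(X)\det(X)^{n-1}$ is precisely $AT(n)$. Since coefficient extraction is $\mathbb{Z}$-linear, the constant factor $R_n^\Text{E} - R_n^\Text{O}$ passes through, producing the coefficient $AT(n)\cdot(R_n^\Text{E} - R_n^\Text{O})$ on the right, hence (by the identity) also on the left.

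There is essentially no obstacle: the result is a direct composition of Proposition~\ref{prop2} and Theorem~\ref{thm2}. The only point requiring care is the factorial bookkeeping, namely that the $(n-1)!$ emerging from Proposition~\ref{prop2} times the $n!(n-1)!$ denominator of Theorem~\ref{thm2} correctly accounts for the $n!(n-1)!^2$ in the theorem statement.
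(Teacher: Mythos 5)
Your proof is correct and is exactly the paper's argument: substitute $A_1=\cdots=A_n=X$ into Proposition~\ref{prop2}, normalize by $n!(n-1)!^2$, and apply Theorem~\ref{thm2} to extract the coefficient. Your explicit factorial bookkeeping and the remark that $n$ must be odd (inherited from the two ingredients) are sound and slightly more careful than the paper's one-line proof.
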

\begin{proof}
This follows by taking $A_1=A_2=\cdots=A_n=X$ in (\ref{eq30}) and applying Theorem~\ref{thm2}.
\end{proof}
Thus, showing that the above coefficient is nonzero would prove both conjectures.% ----------------------------------------------------------------
\section{On the permanent of adjacency matrices}\label{section5}
The evaluation of the permanent of a matrix is a complex problem, even for adjacency matrices of bipartite graphs ((0,1)-matrices) (see \cite{Jerrum04}). Theorem~\ref{thm3} leads to an interesting identity involving the permanents of (0,1)-matrices:
\begin{thm}\label{thm4}
Let $p$ be an odd prime, let $B_p$ be the set of $p\times p$ $(0,1)$-matrices, and let $B_p^*=\{A\in B_p:\det(A)\not\equiv0\pmod{p}\}$. Let $B_p^\dag$ be a set of representatives in $B_p$ of the row permutation classes. Then
\begin{equation*}
    \sum_{A\in B_p^\dag\cap B_p^*}(-1)^{\sigma_0(A)}\per(A)\equiv -1 \pmod{p}.
\end{equation*}
\end{thm}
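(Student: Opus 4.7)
My plan is to deduce the theorem from Theorem~\ref{thm3:1} by exploiting the invariance of the summand $(-1)^{\sigma_0(A)}\per(A)\det(A)^{p-1}$ under row permutations of $A$. Indeed, $\sigma_0$ and $\per$ are unchanged by row permutations, and $\det$ picks up only a sign $\epsilon(\pi)$, which is killed by the even exponent $p-1$. Grouping the sum in (\ref{eq3:1}) by row-equivalence classes, I would rewrite
\[
S := \sum_{A\in B_p}(-1)^{\sigma_0(A)}\per(A)\det(A)^{p-1} = \sum_{A\in B_p^\dag}\frac{p!}{|\Text{Stab}(A)|}(-1)^{\sigma_0(A)}\per(A)\det(A)^{p-1},
\]
where $\Text{Stab}(A)\subset\Sym(p)$ denotes the stabilizer of $A$ under the row action.

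I would then analyse this sum modulo $p^2$. For $A\notin B_p^*$ the factor $\det(A)^{p-1}$ is divisible by $p^{p-1}$, which for $p\geq 3$ is divisible by $p^2$, so such terms contribute nothing modulo $p^2$. For $A\in B_p^*$ nonsingularity of $A$ modulo $p$ means its rows are linearly independent over $\mathbb{F}_p$, hence pairwise distinct; consequently the row-permutation stabilizer of $A$ is trivial and the orbit has size exactly $p!$. Fermat's little theorem gives $\det(A)^{p-1}\equiv 1\pmod{p}$, and Wilson's theorem gives $p!\equiv -p\pmod{p^2}$, so for each such $A$,
\[
p!\cdot(-1)^{\sigma_0(A)}\per(A)\det(A)^{p-1}\equiv -p\cdot(-1)^{\sigma_0(A)}\per(A)\pmod{p^2}.
\]
Summing over $A\in B_p^\dag\cap B_p^*$ then yields
\[
S\equiv -p\sum_{A\in B_p^\dag\cap B_p^*}(-1)^{\sigma_0(A)}\per(A)\pmod{p^2}.
\]

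Finally, I would divide by $p$ and substitute $S/p\equiv -1\pmod{p}$ from Theorem~\ref{thm3:1} to obtain the claimed identity. The main care-points are the structural claim that a matrix nonsingular modulo $p$ must have distinct rows (so its orbit under row permutations has full size $p!$ rather than a proper divisor), and the modular bookkeeping needed to lift the congruence from modulo $p$ to modulo $p^2$ before dividing by $p$.
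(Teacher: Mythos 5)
Your route is genuinely different from the paper's: the paper derives the identity from Theorem~\ref{thm3} together with Drisko's congruence $AT(p)\equiv(-1)^{(p-1)/2}\pmod p$ (Corollary~\ref{cor1}), dividing by $(p-1)!$ and invoking Wilson's theorem, whereas you bypass $AT(p)$ entirely and work straight from Theorem~\ref{thm3:1}, lifting everything to a congruence modulo $p^2$ so that the division by $p$ is legitimate. All of your structural steps are sound: matrices outside $B_p^*$ contribute $0$ modulo $p^2$ because $p^{p-1}\mid\det(A)^{p-1}$ and $p-1\geq 2$; a matrix in $B_p^*$ has pairwise distinct rows, hence trivial row stabilizer and an orbit of exact size $p!$; and $p!\det(A)^{p-1}\equiv -p\pmod{p^2}$ by Wilson and Fermat. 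This correctly yields $S\equiv -p\sum_{A\in B_p^\dag\cap B_p^*}(-1)^{\sigma_0(A)}\per(A)\pmod{p^2}$.

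The problem is the final line. Your congruence gives $\sum_{A\in B_p^\dag\cap B_p^*}(-1)^{\sigma_0(A)}\per(A)\equiv -S/p\pmod p$, so substituting $S/p\equiv -1$ from Theorem~\ref{thm3:1} produces $+1$, not the claimed $-1$: as written, your proof establishes the statement with the wrong sign, and you cannot simply assert that the claimed identity follows. The discrepancy is not in your bookkeeping but in Theorem~\ref{thm3:1} itself, which your careful mod-$p^2$ analysis exposes as having a sign error: in its proof, a $k$-left row shifted matrix whose first row has $a$ ones satisfies $\sigma_0(A)=p(p-a)\equiv a+1\pmod 2$ (not $\equiv a$ as claimed there, since $p$ is odd), so each surviving term is $(-1)^{a+1}a$ and the right-hand side of (\ref{eq3:1}) should be $+1$. (A concrete check for $p=3$: the contribution of the shifted matrices is $6\cdot 1+6\cdot(-8)=-42$, and $-42/3=-14\equiv 1\pmod 3$; equivalently, Theorem~\ref{thm3} gives $S=12$ and $S/3=4\equiv 1$. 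A compensating spurious factor of $-1$ in the displayed formula in the proof of Corollary~\ref{cor1} keeps that corollary correct.) With the corrected value $S/p\equiv 1\pmod p$, your argument does deliver $-1$ and is a valid, self-contained alternative to the paper's proof; to close the gap you must either correct Theorem~\ref{thm3:1} or reroute through Corollary~\ref{cor1} as the paper does.
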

\begin{proof}
Let $B_p^r$ be the subset of $B_p$ containing the regular matrices. From (\ref{eq8}) we have:
\begin{equation*}
    AT(p)=\frac{(-1)^{\frac{p-1}{2}}}{p!(p-1)!}\sum_{A\in B_p^r}(-1)^{\sigma_0(A)}\per(A)\det(A)^{p-1}
\end{equation*}
If $A'$ can be obtained from $A$ by permuting the rows, then $\per(A')=\per(A)$ and $\det(A')^{p-1} =\det(A)^{p-1}$ (since $p-1$ is even). Since the rows of each $A\in B_p^r$ are all distinct, each row permutation class in $B_p^r$ contains exactly $p!$ matrices. Let $B_p^\dag$ be a set of representatives of the row permutation classes in $B_p$. Then
\begin{equation*}
    AT(p)=\frac{(-1)^{\frac{p-1}{2}}}{(p-1)!}\sum_{A\in B_p^\dag\cap B_p^r}(-1)^{\sigma_0(A)}\per(A)\det(A)^{p-1}.
\end{equation*}
By Fermat's little theorem and Wilson's theorem we have
\begin{equation*}
    AT(p)\equiv(-1)(-1)^{\frac{p-1}{2}}\sum_{A\in B_p^\dag\cap B_p^*}(-1)^{\sigma_0(A)}\per(A) \pmod{p}.
\end{equation*}
The result follows from Corollary~\ref{cor1}.
\end{proof}
\begin{rem}
If we view an $n\times n$ (0,1)-matrix $A$ as the adjacency matrix of a bipartite graph $G_A$, having two parts of identical size $n$, then $\per(A)$ is the number of perfect matchings in $G_A$. A set $B_p^\dag$, as in Theorem~\ref{thm4}, represents all possible such graphs, up to renaming the vertices of one of the parts.
\end{rem}
% ----------------------------------------------------------------
\bibliographystyle{amsplain}
\bibliography{AT_refs}
\end{document}